\documentclass[12pt]{amsart}
\usepackage{amsmath, amsthm, mathrsfs, amssymb, verbatim, bbm, url, appendix}
\usepackage{amscd}
\usepackage{latexsym,amssymb}
\usepackage{graphicx}
\usepackage{color}
\usepackage{enumerate}
\allowdisplaybreaks[3]

\addtolength{\textwidth}{+4cm} \addtolength{\textheight}{+2cm}
\hoffset-2cm \voffset-1cm \setlength{\parskip}{5pt}
\setlength{\parskip}{5pt}

\usepackage{latexsym}
\usepackage{epsfig}
\usepackage{amsfonts}
\usepackage{enumerate}
\usepackage{times}


\newcommand{\ra}{\right\rangle}
\newcommand{\la}{\left\langle}

\newcommand{\iy}{\mathcal{Y}}

\newtheorem{dilshift}{Remark}[section]
\newtheorem{conjkrr}{Conjecture}[section]
\newtheorem{virtualkrr}[conjkrr]{Proposition}
\newtheorem{in0}[conjkrr]{Remark}
\newtheorem{ramdatum}[conjkrr]{Definition}
\newtheorem{ex1}[conjkrr]{Example}
\newtheorem{in1}[conjkrr]{Remark}
\newtheorem{in2}[conjkrr]{Remark}
\newtheorem{in3}[conjkrr]{Remark}
\newtheorem{nonperm}[conjkrr]{Remark}
\newtheorem{in4}[conjkrr]{Proposition}
\newtheorem{in5}[conjkrr]{Corollary}
\newtheorem{acg}[conjkrr]{Proposition}
\newtheorem{acg1}[conjkrr]{Remark}

\newtheorem{mainst}[conjkrr]{Theorem}
\newtheorem{kcoh}[conjkrr]{Corollary}
\newtheorem{rem}[conjkrr]{Remark}

\pagestyle{headings}

\begin{document}

\title{A formula for the total permutation-equivariant K-theoretic Gromov-Witten potential }
\author[Tonita]{Valentin Tonita}
\address{Institut f\"ur Mathematik\\ Rudower Chaussee 25 Raum 1425 \\ Berlin \\ Germany}
\email{valentin.tonita@hu-berlin.de}

\date{\today}

\begin{abstract}
We give a summation over graphs type formula for the permutation-equivariant K-theoretic Gromov-Witten total potential of a projective manifold
$X$ in terms of cohomological Gromov-Witten (cohGW) invariants of $X$. We achieve this by describing combinatorially the Kawasaki strata of the moduli spaces
of stable maps to $X$.
\end{abstract}

\maketitle

\section{Introduction}

   Let $X$ be a complex projective manifold. K-theoretic GW invariants are symplectic invariants associated to $X$ defined as Euler characteristics of certain sheaves on $X_{g,n,d}$, the moduli spaces of stable maps to $X$.  They have been introduced by Givental and Lee in \cite{giv_lee} and \cite{ypl}. They give rise to similar structures with the better studied cohomological GW invariants, for e.g. one can use the genus zero invariants to define a quantum K-product deforming the usual tensor product of coherent sheaves on $X$. At the same time, K-theoretic GW invariants are more difficult to compute than the cohomological ones.
 
  This problem was addressed in \cite{gito} in genus zero, where it was shown how, in principle , one can determine the K-theoretic GW invariants of $X$ from the cohGW invariants. The starting point of the proof is the Kawasaki Riemann Roch (KRR) formula applied to the moduli spaces of genus zero stable maps to $X$. KRR formula expresses Euler characteristics on a compact complex orbifold as cohomological integrals on its inertia orbifold. Roughly speaking, the inertia orbifold parametrizes strata of maps with non-trivial symmetries.
  An essential 
  part of the proof in \cite{gito} is the description of the connected components of the inertia orbifold of the moduli spaces $X_{0,n,d}$ in terms of combinatorial data. This is not possible to achieve for higher genus due to the presence of discrete automorphisms of the domain curve. 
  
   More recently Givental introduced in \cite{giv} {\em permutation-equivariant K-theoretic GW invariants}: these are Euler characteristics on the orbifolds $X_{g,n,d}/S_n$, where the $S_n$ action renumbers the marked points. The permutation-equivariant version of the theory seems to fit better within the broader framework of mirror symmetry. Givental completely characterized the genus zero potential of $X=[pt.]$ and he conjectured that the 
   total potential is a solution of a finite difference version of the KdV equation.
  
   In the present paper we make a first step towards this conjecture by expressing higher genus permutation-equivariant invariants in terms of cohomological ones. Again the starting point is KRR formula applied to the moduli spaces $X_{g,n,d}/S_n$. It turns out that strata of maps with symmetries can be described by combinatorial data: graphs where vertices correspond to irreducible components of the domain curve and edges correspond to nodes. The vertices of the graphs parametrize stable maps to the target orbifold $X\times B\mathbb{Z}_m$ for some integer $m$. The contributions associated to each vertex are shown to be cohomological integrals which can be explicitly expressed in terms of the cohGW invariants of $X$. To the edges one can associate differential operators which act on the adjacent vertex contributions.
    
    Our main result expresses the permutation-equivariant K-theoretic total potential of $X$ as a summation over graphs of integrals expressible in terms of cohGW of $X$. Our results are valid for $X=[pt.]$ and for general $X$ modulo Conjecture \ref{conj1}.
    
    The paper is organized as follows. In Section 2 we define the main objects of interest: permutation-equivariant K-theoretic Gromov-Witten invariants and the total K-theoretic potential of $X$. In Section 3 we state KRR formula and we encode strata of maps with non-trivial symmetries in $X_{g,n,d}$ by combinatorial data: graphs with additional "ramification data" attached to each vertex. In Section 4 we identify the vertex contributions as cohomological integrals on the moduli spaces of stable maps to $X\times B\mathbb{Z}_m$ and we process them via twisted GW theory to express them in terms of cohGW invariants of $X$. In the last section we describe the edge operators and we state our main result Theorem \ref{mainthm}.
  
   \textbf{Acknowledgments}. I would like to thank A. Givental for useful discussions and for explaining to me his work \cite{giv}.
\section{Permutation-equivariant K-theoretic Gromov-Witten invariants}

 Let $X$ be a complex projective manifold. We denote by $X_{g,n,d}$ Kontsevich's moduli stack of genus zero stable maps to $X$: they parametrize
  data $(C, x_1,\ldots ,x_n, f)$  such that
  \begin{itemize}
  \item $C$ is a connected projective complex curve of arithmetic genus $g$ with at most nodal singularities.
  \item  $(x_1,\ldots x_n) \in C$ is an ordered $n$-tuple of distinct smooth points on $C$ (they are called marked points).
  \item $f:C\to X$ is a map of degree $d\in H_2(X,\mathbb{Z}$).
  \item The data $(C, x_1,\ldots ,x_n, f)$ has finite automorphism group, where an automorphism is defined to be an automorphism $\varphi:C\to C$ such 
  that $\varphi(x_i) =x_i$ for all $i=1,..n$ and $f\circ \varphi =f$. 
  \end{itemize}
   For each $i=1,..,n$ there are evaluation maps $ev_i : X_{g,n,d}\to X $ defined by sending a point $(C, x_1,\ldots ,x_n, f)\mapsto f(x_i)$ and   cotangent line bundles $L_i \to X_{g,n,d}$ whose fibers over a point $(C, x_1,\ldots ,x_n, f)$ are identified with $T^\vee_{x_i}C$.
  
 Denote by $K^0(X)$ he topological ring of vector bundles on $X$ and let $E_i\in K^0(X)$. K-theoretic Gromov-Witten (GW) invariants have been defined by Givental and Lee (\cite{giv_lee},\cite{ypl}) as sheaf holomorphic Euler characteristics on $X_{g,n,d}$  obtained using the classes $ev^*_i(E_i)$ and the line bundles $L_i$:
   \begin{align*}
   \chi\left(X_{g,n,d}, \mathcal{O}_{g,n,d}^{vir}\otimes _{i=1}^n ev_i^* (E_i)L_i^{k_i}\right)\in \mathbb{Z}.
   \end{align*}
 
 Here $\mathcal{O}_{g,n,d}^{vir}\in K_0(X_{g,n,d})$ is the virtual structure sheaf defined in \cite{ypl}.
  We will use correlator notation for the invariants:
   \begin{align*}
   \la E_1 L^{k_1},\ldots , E_n L^{k_n}\ra_{g,n,d}:= \chi\left(X_{g,n,d}, \mathcal{O}_{g,n,d}^{vir}\otimes _{i=1}^n ev_i^* (E_i)L_i^{k_i}\right).
   \end{align*}

 Givental has recently generalized  the definition of K-theoretic GW invariants by considering sheaf Euler characteristics on the Deligne-Mumford stacks $X_{g,n,d}/S_n$. The action of $S_n$ on the moduli spaces is given by renumbering the marked points. 
 In other words the alternating sum
       \begin{align*}
       \left[\mathbf{t}(L),\ldots ,\mathbf{t}(L)\right]_{g,n,d}:= \sum (-1)^m H^m\left(X_{g,n,d}; \mathcal{O}_{g,n,d}^{vir}\otimes_{i=1}^n \mathbf{t}(L_i)\right)
       \end{align*}
       
     carries a well-defined $S_n$ module structure. Here $\mathbf{t}(q)$ is a Laurent polynomial in $q$ with coefficients in $K(X)\otimes \Lambda$, where $\Lambda$ is an algebra which is assumed to carry 
     $\psi^k$ operations and, for convergence purposes, to have a maximal ideal $\Lambda_+$. We endow $\Lambda$ with the corresponding $\Lambda_+$-adic topology. In applications $\Lambda$ naturally satisfies these assumptions: it usually includes the Novikov variables, the algebra of symmetric polynomials in a given number of variables and/or the torus equivariant $K$-ring of the point.
     For suitable choices of $\Lambda$ the permutation-equivariant invariants encode all the information about the $S_n$ modules above. We refer to \cite{giv} for details.
      
       The invariants :
      \begin{align*}
         \la \mathbf{t}(L),\ldots ,\mathbf{t}(L) \ra^{S_n}_{g,n,d}
         \end{align*}
         are defined as K-theoretic push forwards of the classes  $\mathcal{O}_{g,n,d}^{vir}\otimes_{i=1}^n \mathbf{t}(L_i)$ along the map  $X_{g,n,d}/S_n \to [pt.]$. 
      Let $$ \mathcal{K}_+ := \mathbb{C}[q,q^{-1}]\otimes K^0(X, \Lambda_+),$$
 where we assume $\Lambda$ contains the Novikov ring based on degrees of effective curves in $X$ denoted $\mathbb{C}[[Q]]$. 
       The genus $g$ potential of $X$ is defined as 
        $$ \mathcal{F}_g^K: \mathcal{K}_+ \to \Lambda_+,$$
       $$\mathcal{F}^K_g(\mathbf{t}(q)+1-q) =\sum_{n,d}Q^d  \la \mathbf{t}(L),\ldots ,\mathbf{t}(L)\ra^{S_n}_{g,n,d} .$$ 
 The total potential is
     $$\mathcal{D}_X^K :\mathcal{K}_+ \to \Lambda_+[[\hbar, \hbar^{-1}]],$$
       $$ \mathcal{D}_X^K(\mathbf{t}(q)+1-q): = \operatorname{exp}\left(\sum_{g\geq 0}\hbar^{g-1}\mathcal{F}^K_g\right).$$

   \begin{dilshift}
  {\em The translation by $1-q$ in the argument of the potentials is called the dilaton shift.} 
   \end{dilshift}
   
\section{Kawasaki strata of $X_{g,n,d}$}

To express holomorphic Euler characteristics of a vector bundle $V$ on a compact complex orbifold $\iy$ as a cohomological integral one uses Kawasaki Riemann Roch (KRR) theorem of \cite{kawasaki} (proven by T\"oen in \cite{toen}  for proper smooth Deligne-Mumford stacks). The integrals are supported on the inertia orbifold $I\iy$ of $\iy$:
       \begin{align}
        \chi(\iy, V) = \sum_\mu \int_{\iy_\mu} \operatorname{Td}(T_{\iy_\mu})\operatorname{ch} \left(\frac{\operatorname{Tr}(V)}{\operatorname{Tr}(\Lambda^\bullet N^\vee_\mu)}\right).\label{eqn:KRR} 
        \end{align}
    We now explain this ingredients of this formula. $I\iy$ is the inertia orbifold of $\iy$, given set-theoretically by pairs $(y, (g))$, where $y\in \iy$ and $(g)$ is (the conjugacy class of) a symmetry which fixes $y$.  We denote by $\iy_\mu$ the connected components of $I\iy$.

        For a vector bundle $V$, let $V^\vee$ be the dual bundle to $V$. The restriction of $V$ to $\iy_\mu$ decomposes in characters of the $g$ action. Let $V_r^{(l)}$ be the subbundle of the restriction of $V$ to $\iy_\mu$ on which $g$ acts with eigenvalue $e^{\frac{2\pi i l}{r}}$. Then the  trace $\operatorname{Tr}(V)$ is defined to be the orbibundle whose fiber over the point $(p, (g))$ of $\iy_\mu$ is 
         \begin{align*}
      \operatorname{Tr}(V):= \sum_{0\leq l\leq r-1} e^{\frac{2\pi i l}{r}} V^{(l)}_r . 
         \end{align*} 
      Finally, $\Lambda^\bullet N^\vee_\mu$ is the K-theoretic Euler class of the normal bundle $N_\mu$ of $\iy_\mu$ in $\iy$. $\operatorname{Tr}(\Lambda^\bullet N^\vee_\mu)$ is invertible because the symmetry $g$ acts with eigenvalues different from $1$ on the normal bundle to the fixed point locus. 
 
    We call the connected components of $I\iy$ {\em Kawasaki strata}. 
    
    $X_{g,n,d}$ is not smooth but it has a perfect obstruction theory which can be used to define its virtual fundamental class (see \cite{behfan}). For a stack $(\mathcal{Y},E^\bullet)$ with a perfect obstruction theory one can choose an explicit resolution of $E^\bullet$ as a complex of vector bundles $E^{-1}\to E^0$. Let $E_0\to E_1$ be the dual complex. Then the virtual tangent bundle of $\mathcal{Y}$ can be defined as the class $[E_0] \ominus [E_1] \in K^0(\mathcal{Y})$.
    Moreover the Kawasaki strata of $\mathcal{Y}$ inherit perfect obstruction theories which can be used to define their virtual normal bundles.
     
    It was proved in \cite{to_k} that one can apply KRR theorem to the moduli spaces $X_{0,n,d}$ by replacing all the ingredients in the formula with their virtual counterparts.

\begin{conjkrr} \label{conj1}
{\em The virtual KRR formula of \cite{to_k} holds for the moduli spaces $X_{g,n,d}$}.
\end{conjkrr}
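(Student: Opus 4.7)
The plan is to extend the proof of the virtual KRR formula for $X_{0,n,d}$ given in \cite{to_k} to moduli of stable maps of arbitrary genus, by following the same overall scheme and pinpointing the places where the genus-zero hypothesis was used essentially. The starting point is that $X_{g,n,d}$ carries a perfect obstruction theory \cite{behfan} with virtual structure sheaf $\mathcal{O}^{vir}_{g,n,d}$ \cite{ypl}, and that each Kawasaki stratum $(X_{g,n,d})_\mu$ of the inertia stack admits an induced perfect obstruction theory: the fixed part of the obstruction complex under the cyclic symmetry defining the stratum gives the stratum's virtual tangent bundle, while the moving part defines the virtual normal bundle $N^{vir}_\mu$. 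This construction parallels Graber--Pandharipande's treatment of torus-fixed loci, and should go through verbatim for the internal symmetries coming from automorphisms of the domain curve.

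Concretely, I would proceed in three steps. First, check that $X_{g,n,d}$ admits local embeddings into smooth Deligne--Mumford stacks $Y$ (e.g. by realizing it as the zero locus of a section of an auxiliary bundle over an ambient stack of maps to a projective embedding of $X$). Second, apply T\"oen's version of KRR \cite{toen} to the smooth ambient $Y$ and transport the resulting formula to $X_{g,n,d}$ via a Koszul-type resolution of $\mathcal{O}^{vir}_{g,n,d}$ determined by the obstruction theory, exactly as in \cite{to_k}. Third, match the ambient inertia decomposition of $Y$ with the Kawasaki stratification of $X_{g,n,d}$, and verify that the traces $\operatorname{Tr}(\mathcal{O}^{vir})$ and the Euler factor $\operatorname{Tr}(\Lambda^\bullet (N^{vir}_\mu)^\vee)$ produced by this restriction agree with those intrinsically built from the induced obstruction theory on the strata.

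The hard part is precisely the last compatibility. In genus zero the obstruction complex has a transparent form coming from $H^1$ of pullbacks to trees of rational curves, and its eigenvalue decomposition under a cyclic symmetry can be read off combinatorially. In positive genus the domain can have components of positive geometric genus that contribute extra $H^1$, and one has to show that the trace of the obstruction bundle under the domain automorphism $g$ combines correctly with the trace of the tangent complex to reproduce $N^{vir}_\mu$. A natural tool is the deformation triangle relating the cotangent complex of a stable map to that of its domain and to $f^*T_X$; analyzing its $g$-equivariant structure on each stratum reduces the problem to checking the compatibility on the moduli of stable curves equipped with a $\mathbf{Z}_m$-action, where it is known. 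Once this matching is in place, the remaining bookkeeping is formal and yields the desired genus-$g$ analog of the virtual KRR formula of \cite{to_k}, after which the subsequent sections of the paper apply verbatim.
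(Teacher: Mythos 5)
The statement you are attempting to prove is explicitly labeled a \emph{Conjecture} in the paper, not a theorem: the author offers no proof of it, notes only that it is trivially true for $X=[\mathrm{pt.}]$ (where $\overline{M}_{g,n}$ is smooth and ordinary KRR suffices) and that it propagates from $X_{g,0,d}$ to $X_{g,n,d}/S_n$, and states the main results ``modulo Conjecture 3.1.'' So there is no paper proof to compare against; the relevant question is whether your argument actually closes the gap the paper leaves open, and it does not.

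The concrete obstruction is already in your first step. The genus-zero proof in \cite{to_k} relies on embedding $X_{0,n,d}$ into a \emph{smooth} ambient Deligne--Mumford stack via a projective embedding $X\hookrightarrow\mathbb{P}^r$: the ambient $\overline{M}_{0,n}(\mathbb{P}^r,\beta)$ is smooth because $H^1(C,f^*T\mathbb{P}^r)$ vanishes on genus-zero prestable curves, and $X_{0,n,d}$ sits inside it as the zero locus of a section of a bundle, so T\"oen's theorem \cite{toen} applies upstairs. For $g\geq 1$ this fails at the root: $\overline{M}_{g,n}(\mathbb{P}^r,\beta)$ is itself singular (already because contracted components of positive genus contribute $H^1(C,\mathcal{O})\neq 0$), so the ``auxiliary bundle over an ambient stack of maps to a projective embedding of $X$'' you invoke does not furnish a smooth $Y$, and one cannot even start the Koszul-resolution argument. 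Supplying a replacement ambient compatible with the Kawasaki stratification is precisely the unresolved content of the conjecture.

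Beyond that, your third step is where you yourself flag the difficulty (``The hard part is precisely the last compatibility''), and the reduction you then offer --- that the $g$-equivariant compatibility of the deformation triangle on moduli of stable curves with a $\mathbb{Z}_m$-action ``is known'' --- is asserted without reference or argument. Your proposal is a sensible roadmap that correctly locates where the difficulties are (and the strategy of taking fixed/moving parts of the obstruction theory in the style of Graber--Pandharipande is reasonable for defining the induced data on strata), but it does not constitute a proof, which is consistent with the paper's decision to leave this as a conjecture rather than a theorem.
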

 
 Conjecture \ref{conj1} is trivially true for the case $X=[pt.]$. Moreover it is easy to prove the following
  \begin{virtualkrr}
  {\em If conjecture \ref{conj1} is true for $X_{g,0,d}$ then it is true for $X_{g,n,d}/S_n$. } 
  \end{virtualkrr}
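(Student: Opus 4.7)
The plan is to reduce virtual KRR for $X_{g,n,d}/S_n$ to virtual KRR for $X_{g,0,d}$ in two independent steps: first pass from the base $X_{g,0,d}$ to $X_{g,n,d}$ by adding marked points one at a time, then pass from $X_{g,n,d}$ to its $S_n$-quotient.

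For the first step I would use the iterated forget-and-stabilize morphism $\pi\colon X_{g,n,d}\to X_{g,0,d}$, factored through the tower
\begin{equation*}
X_{g,n,d}\to X_{g,n-1,d}\to \cdots \to X_{g,1,d}\to X_{g,0,d},
\end{equation*}
each of whose arrows is identified, after stabilization, with the universal curve over the preceding space. Such a morphism is proper and virtually flat, with relative virtual tangent complex of rank one concentrated in degree zero, so the virtual structure sheaves satisfy $\pi_i^{*}\mathcal{O}^{\mathrm{vir}}_{X_{g,i-1,d}} = \mathcal{O}^{\mathrm{vir}}_{X_{g,i,d}}$ and virtual normal bundles of Kawasaki strata pull back on the nose. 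Because the inertia stack is functorial under proper pullback and the trace and K-theoretic Euler class factors in \eqref{eqn:KRR} commute with $\pi_i$, one propagates virtual KRR from the base to the total space by induction on $i$. This yields virtual KRR for $X_{g,n,d}$ itself.

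For the second step I would invoke the standard equivariant principle: if $G$ is a finite group acting on a Deligne--Mumford stack $\mathcal{Y}$ carrying a perfect obstruction theory, then the inertia stack of $\mathcal{Y}/G$ decomposes as $\coprod_{[\gamma]} \mathcal{Y}^{\gamma}/C_G(\gamma)$, where $\gamma$ runs over conjugacy classes of ``extended automorphisms'' combining elements of $G$ with automorphisms of stable maps, and the virtual structures inherited from $\mathcal{Y}$ descend compatibly to $\mathcal{Y}/G$. A holomorphic Euler characteristic on $\mathcal{Y}/G$ picks out the $G$-invariant part of the corresponding object on $\mathcal{Y}$, and averaging the KRR formula on $\mathcal{Y}$ over $G$ reproduces exactly the sum over the strata of $I(\mathcal{Y}/G)$ with the correct trace and normal bundle factors. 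Applied to $\mathcal{Y}=X_{g,n,d}$ with $G=S_n$, this delivers virtual KRR for $X_{g,n,d}/S_n$.

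The only non-routine point I anticipate is bookkeeping: one must check that the explicit resolution of the perfect obstruction theory used in \cite{to_k} is compatible with pullback along the universal-curve maps $\pi_i$, so that the virtual counterparts of $\operatorname{Tr}(V)$ and $\operatorname{Tr}(\Lambda^\bullet N^\vee)$ in the KRR integrand also commute with $\pi_i$. This should follow automatically from the description of the relative obstruction theory of $\pi_i$ in terms of the (virtual) tangent bundle of the universal curve, and it is the sense in which this proposition is ``easy'': the genuinely hard work is inside Conjecture \ref{conj1} itself, and propagating the conclusion along the tame operations of adding marked points and of taking a finite quotient is essentially formal.
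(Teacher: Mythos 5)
Your overall intuition---that the reduction should go through the forgetful map and is ``essentially formal''---is on the right track, but the mechanism you describe does not actually do the job, and the one genuinely nontrivial ingredient is missing.

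The paper's proof is a single-step pushforward argument. Since the forgetful map $f\colon X_{g,n,d}/S_n\to X_{g,0,d}$ is $S_n$-equivariant and the virtual structure sheaf pulls back along $f$, the projection formula gives
$\chi(X_{g,n,d}/S_n,\mathcal{O}^{vir}\otimes\mathcal{E})=\chi(X_{g,0,d},\mathcal{O}^{vir}\otimes f_*\mathcal{E})$.
One then applies the hypothesis (virtual KRR on $X_{g,0,d}$) to the right-hand side, obtaining an integral over $IX_{g,0,d}$ whose integrand involves $\operatorname{ch}\circ\operatorname{Tr}(f_*\mathcal{E})$. The crucial step---the one you do not supply---is to convert this back into an integral over $I(X_{g,n,d}/S_n)$; this is \emph{not} automatic from ``the inertia stack is functorial'' or ``the trace commutes with $\pi_i$,'' but requires To\"en's orbifold Grothendieck--Riemann--Roch (stated in \cite{tseng}, Appendix A), which rewrites
$\operatorname{ch}\circ\operatorname{Tr}(f_*\mathcal{E})$ as $(If)_*\bigl[\operatorname{ch}\circ\operatorname{Tr}(\mathcal{E})\,\widetilde{\operatorname{Td}}(T_f)\bigr]$. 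The correction class $\widetilde{\operatorname{Td}}(T_f)$ is exactly what your ``commute with $\pi_i$'' glosses over; without it the argument breaks because the inertia stacks of the two spaces do not correspond component-by-component and pushforward along $If$ does not commute naively with $\operatorname{ch}\circ\operatorname{Tr}$.

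Two further points of concern in your proposal. First, splitting the reduction into a tower of one-point-forgetting maps and then an $S_n$-quotient is unnecessary and adds difficulty: the paper's choice of the direct map $X_{g,n,d}/S_n\to X_{g,0,d}$ bypasses both your tower induction and your decomposition $I(\mathcal{Y}/G)=\coprod_{[\gamma]}\mathcal{Y}^\gamma/C_G(\gamma)$, the latter of which is delicate in the Deligne--Mumford setting. Second, your claim that ``virtual normal bundles of Kawasaki strata pull back on the nose'' under $\pi_i$ is not correct as stated; the fixed loci of symmetries change when a marked point is added or removed, and it is the GRR mechanism above (applied to pushforward, not pullback) that handles this bookkeeping. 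So the gap to fill is precise: replace the assertion of commutativity/functoriality with an invocation of To\"en's GRR and its $\widetilde{\operatorname{Td}}$ term, and phrase the reduction as a pushforward along the single equivariant forgetful map rather than a pullback up a tower.
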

 \begin{proof}
 The map $f: X_{g,n,d}/S_n \to X_{g,0,d}$ which forgets the $n$ marked points is $S_n$-equivariant and the virtual structure sheaf on $X_{g,n,d}/S_n$ is the pullback of the virtual structure sheaf on $X_{g,0,d}$. Hence for any $S_n$-equivariant class $\mathcal{E}$ we have
  $$ \chi(X_{g,n,d}/S_n , \mathcal{O}_{g,n,d}^{vir}\otimes \mathcal{E}) =\chi(X_{g,0,d}, \mathcal{O}_{g,0,d}^{vir}\otimes f_*\mathcal{E} ).$$
 According to our assumption one can apply virtual KRR for $X_{g,0,d}$ to express the RHS above as a cohomological integral on $IX_{g,0,d}$. The integrands will involve the class $\operatorname{ch} \circ \operatorname{Tr}(f_*\mathcal{E})$. This can be processed using T\"oen's Riemann-Roch (\cite{tseng}, Appendix A) and rewritten
   as $$(If)_* \left[\operatorname{ch}\circ \operatorname{Tr}(\mathcal{E})\widetilde{\operatorname{Td}}(T_f)\right].$$ 

    We refer the reader to \cite{tseng}(Appendix A) for the definition of the class $\widetilde{\operatorname{Td}}$.  It is straightforward to check that using these two steps one expresses the Euler characteristic of the class $\mathcal{O}_{g,n,d}^{vir}\otimes \mathcal{E} $ as a cohomological integral on the inertia orbifold of $X_{g,n,d}/S_n$ of exactly the form in the virtual KRR theorem. \end{proof}   
 Let us now describe the Kawasaki strata of $ Z_\gamma\subset X_{g,n,d}/S_n$ fixed by a symmetry $\gamma$. We encode the combinatorial data of a {\em generic} stable map $C_\gamma$ in $Z_\gamma$  by a graph $\Gamma$: 
 
  - the vertices $v$ of $\Gamma$ correspond to irreducible components of the quotient curve $C_\gamma/\langle \gamma\rangle$. We denote by $C_v$ the preimage of each such irreducible component on the cover curve $C_\gamma$ and by $m_v$ the smallest integer such that $\gamma^{m_v}$ is the identity when restricted to $C_v$. The restriction of the stable map to $C_v$ can be identified with a map $C_v / \mathbb{Z}_{m_v}\to X\times B\mathbb{Z}_{m_v}$. We denote by $g_v$  the genus of the quotient curve $C_v/\mathbb{Z}_{m_v}$ and $d_v\in H_2(X,\mathbb{Z})$ the degree of the (quotient) map.
 
  - for every node connecting the curves $C_{v_i}/\mathbb{Z}_{m_i}$ and $C_{v_j}/\mathbb{Z}_{m_j}$ there is an edge between $v_i$ and $v_j$.  
  
  \begin{in0}
   {\em  Notice that edges correspond to nodes which cannot be smoothed without breaking the symmetry. Inside $Z_\gamma$ there are curves with more nodes, namely the ones where the same cyclic group acts on both branches in a balanced way (i.e. with eigenvalues inverse to each other on the cotangent space at the node to each branch).}
   \end{in0}
  In addition we record the following ramification data attached to vertices $v$. Consider a (total or partial) branch point on the quotient curve and assume it has $d$ preimages on the cover. There is an action of a cyclic subgroup of $\mathbb{Z}_{m_v}:=\langle \gamma_v\rangle$ of order $m_v/g.c.d.(d,m_v)$ on the cotangent space at each of the ramification points above the given branch point.  
   Following \cite{np} (see Definition 2.8, Example 2.11) we identify the set of pairs $(H,\chi) $ where $H$ is a nontrivial subgroup of $\mathbb{Z}_{m_v} $ and $\chi$ an injective character  with the set $\{1,2,\ldots m_v-1\}$: the element $r$ of the set corresponds to the subgroup $\langle \gamma_v^r\rangle \leq \mathbb{Z}_{m_v}$ and the character given by 
  \begin{align*}
  \gamma_v^r \mapsto e^{2\pi i \frac{gcd (r, m_v)}{m_v}} . 
  \end{align*}
  We refer to such a branch point as a {\em branch point of type $r$}.
  
  \begin{ex1}
  {\em Take $m_v=12$. A branch point of type $8$ has $g.c.d. (8,12) = 4$ preimages on the covercurve and the element $8\in \mathbb{Z}_{12}$ acts on each of the cotangent spaces by multiplication by $e^{2\pi i /3}$}.
  \end{ex1}
   
  So we record two strings of positive integers $(a_1, \ldots a_{m_v-1}, b_1,\ldots b_{m_v-1})$ where $b_r$ is the number of branch points of type $r$ and $a_r\leq b_r$ is the number of marked points. In addition let $a_0$ the number of marked points on $C_v/\mathbb{Z}_{m_v}$ which are unramified. Each such a marked point has as preimage in $C_v$ an $m_v$-tuple of marked points which get permuted cyclically by the symmetry $\gamma_v$.  
  \begin{ramdatum}
  {\em We call }
  \begin{align*}
  \vec{R}_v := (a_0, \ldots a_{m_v-1}, b_1,\ldots , b_{m_v-1})
  \end{align*}
   the ramification data {\em associated to the vertex $v$}.
  \end{ramdatum}

  We also need to keep track of the ramification type of the node.  To do this we define a flag to be a pair $F=(v,e)$ with $v$ adjacent  to $e$. To each flag $(v,e)$ we associate an integer $1\leq r_F \leq m_v$: the ramification type of the node viewed as a point on $C_v/\mathbb{Z}_{m_v}$. We adopt the convention that if $r_F=m_v$ there are $m_v$ preimages of the node on the cover curve. 
  
  The ramification data is subject to the following constraint (see \cite{np})
  \begin{align*}
  m_v |\sum_{r=1}^{m_v-1} rb_r  + \sum_{F=(v,e)}r_F\quad .
  \end{align*}
    
  \begin{in1}
  {\em The ramification data determines via Riemann Hurwitz formula the genus $g(C_v)$ of $C_v$ (recall that $C_v$ is irreducible for a generic element of $Z_\gamma$):}
   \begin{align}
   2g(C_v)-2 = m_v(2g_v-2)+ \sum_{r=1}^{m_v-1} b_r (m_v- g.c.d.(i,m_v)) + \sum_{ v\in F}(m_v-g.c.d.(r_F,m_v)),
   \end{align}
  \end{in1}

  \begin{in2}
 {\em  The moduli spaces of stable maps to $X\times B\mathbb{Z}_m$ comes with evaluation maps at each marked point which land in the inertia stack $I(X \times B\mathbb{Z}_{m_v})$. $I(X\times B\mathbb{Z}_{m_v})$ consists of $m_v$ copies of $X\times B\mathbb{Z}_{m_v}$ labeled by elements of $\mathbb{Z}_{m_v}$ (powers of  $\gamma_v$). The ramification data determines the connected components of $I(X \times B\mathbb{Z}_{m_v})$ where the evaluation map at each marked point, branch point or node (viewed as a point on $C_v/\mathbb{Z}_{m_v}$ after separating the branches) lands.}
  \end{in2}
  
 \begin{in3}
 {\em The preimage of a node on $C_v$ may consist of more than a single point - in fact it consists of exactly g.c.d. $(r_F, m_v$) points. Let $F=(v,e)$ and call $w$ the other vertex adjacent to $e$. On the cover curve $C_v$ each preimage of the node is again a node connecting $C_v$ with $C_w$. In this case $C_w$ is a disjoint union of (necessarily isomorphic) curves which are permuted cyclically by $\gamma_v\in \mathbb{Z}_{m_v}$ }.  
 \end{in3}
 
   \begin{nonperm}
   {\em The data above can easily be adjusted to describe Kawasaki strata in $X_{g,n,d}$: requiring each marked points to be fixed by symmetries $\gamma_v$ means that  $a_r=0$ if $(r, m_v)\neq 1$. Moreover  curves $C_w$ consisting of several components permuted cyclically are not allowed to carry marked points. }
   \end{nonperm}
 Let now $S_{A_r}, S_{B_r}$ be the permutation groups permuting the $a_r$ marked ramification points of type $r$ , respectively $b_r-a_r$ unmarked ramification points of the same type and denote by $S_v:= \prod_r S_{A_r}\prod_r S_{B_r}$. Moreover let $\vert b\vert=\sum_r b_r + $ the number of nodes on $C_v/\mathbb{Z}_{m_v}$.
 
  \begin{in4}
  Each vertex  parametrizes a closed and open substack of the stack of stable maps  to $(X\times B\mathbb{Z}_{m_v})_{g_v,\vert b\vert+a_0,d_v}/S_v$.  
  \end{in4}
  \begin{proof}
 Consider the case when the graph $\Gamma$ consists of only one vertex $v$, i.e. the generic domain curve in the Kawasaki stratum is irreducible. A map to $X\times B\mathbb{Z}_{m_v}$ consists of a map to $X$ together with a $\mathbb{Z}_{m_v}$ cover of the domain curve, branched over the marked points in a way encoded by the ramification data. It is easy to see that the proofs in \cite{np}, \cite{npot} (where $X=[pt.]$, and there is no $S_n$ action) carry on in our situation. Likewise, the generalization to other graphs is immediate: the graph records the combinatorial type of a generic domain curve in a given Kawasaki stratum and each vertex together with its ramification data specifies a cyclic cover of  the domain curve.  \end{proof}
  \begin{in5}
  {\em The disjoint union $\coprod_{g,n,d}I(X_{g,n,d}/S_n)$ is parametrized by graphs $\Gamma$ with ramification data attached to the vertices subject to the constraints described above.}
  \end{in5}
 
  The K-theoretic potential of $X$ is a summation over graphs $\Gamma$  of (cohomological) integrals. In the following sections we express these integrals in terms of the cohGW potentials of $X$.
  \section{The vertex contributions}
 
 As hinted at in the previous section, the vertices of graphs $\Gamma$ contribute in KRR formula as certain GW invariants of $X\times B\mathbb{Z}_m$ (we will ignore the subscript $v$ throughout this section) additionally twisted by certain characteristic classes of the tangent and normal bundle of these strata. We will now express these contributions in terms of the cohGW theory of $X$. 
 
 Recall that the coh GW invariants of $X$ are defined as
 $$\langle \varphi_1 \psi^{k_1},\ldots ,\varphi_n \psi^{k_n}\rangle_{g,n,d}:= \int_{[X_{g,n,d}]}\prod_{i=1}^n ev_i^*(\varphi_i)\psi_i^{k_i},$$ 
where $\varphi_i\in H^*(X,\Lambda)$.
  Let $$\mathcal{H}_+:=\mathbb{C}[z^{-1},z]]\otimes H^*(X,\Lambda).$$
 Let $\mathcal{F}_g^X, \mathcal{D}_X$ the genus $g$, respectively total cohomological descendant potential of $X$. They are defined as 
 
 $$\mathcal{F}_g^X: \mathcal{H}_+ \to \Lambda,\quad \mathcal{F}_g^X (\mathbf{t}(z)-z)= \sum_{n,d}\frac{Q^d}{n!}  \la \mathbf{t}(\psi),\ldots ,\mathbf{t}(\psi)\ra^{X}_{g,n,d}, $$
 $$ \mathcal{D}_X:\mathcal{H}_+ \to \Lambda[[\hbar,\hbar^{-1}]],\quad \mathcal{D}_X(\mathbf{t}(z)-z) := \operatorname{exp}\left(\sum_{g\geq 0}\hbar^{g-1}\mathcal{F}_g^X\right).$$
   
   Recall that $H^*(X/\mathbb{Z}_m)=\oplus_{h\in \mathbb{Z}_m}H^*(X)$. Hence we can consider the domain of the total cohomological potential of $X\times B\mathbb{Z}_m$ as the direct sum $\oplus_{h\in \mathbb{Z}_m} \mathcal{H}_+$.
    According to \cite{jaki} the  cohomological potentials of $X$ and $X/\mathbb{Z}_m$ are related as follows: let $$\mathbf{t} =\sum_{h\in \mathbb{Z}_m} \mathbf{t}_h h , \quad \mathbf{t}_h\in \mathcal{H}_+.$$  
     Let $\chi_1,...,\chi_m$ be the $m$ characters of the irreducible one dimensional representations of $\mathbb{Z}_m$. Then:
  \begin{align}
 \mathcal{D}_{X/\mathbb{Z}_m}\left(\sum_h \mathbf{t}_h h - z\right) & = \prod_{j=1}^m \operatorname{exp} \left(\sum_{g\geq 0} \frac{\hbar^{g-1}}{m^{2-2g}}\mathcal{F}_g^X(\sum_h \mathbf{t}_h \chi_j(h) - z) \right) \nonumber\\
                                        & = \prod_{j=1}^m  \mathcal{D}_X \left(\frac{\sum_h \mathbf{t}_h \chi_j(h)-z}{m} \right) .\label{xxzm}
\end{align} 

  In the last equality we used the homogenity of the genus $g$ potential.
  
  Recall that KRR formula involves various characteristic classes of tangent and normal bundles of Kawasaki strata. We now describe these classes and use the formalism of twisted GW theory to express the generating series of integrals involving these classes (the twisted potential) in terms of $\mathcal{D}_{X/\mathbb{Z}_m}$ (the untwisted potential). 
  
  According to \cite{tom} the virtual tangent bundle $T_{g,n,d}$ to the moduli spaces $X_{g,n,d}$ can be described in $K_0(X_{g,n,d} )$ in terms of pushforwards along the universal curve $\pi:X_{g,n+1,d}\to X_{g,n,d}$: 
   \begin{align*}
  T_{g,n,d} =\pi_*(ev^* (T_X -1)) - \pi_*(L^{-1}_{n+1} -1)-(\pi_* i_* \mathcal{O}_\mathcal{Z})^\vee ,
   \end{align*}  
  where $\mathcal{Z}\subset X_{g,n+1,d}$ is the codimension two locus of nodes.
    
 Given a Kawasaki stratum in $Z\subset X_{g,n,d}$ which parametrizes maps with a $\mathbb{Z}_m$ symmetry we denote by $\pi : \mathcal{C}\to Z $ the universal family over $Z$ and by $\widetilde{\pi}:\mathcal{C}/\mathbb{Z}_m\to Z$ the  universal family of quotient curves by the action of $\mathbb{Z}_m$.  

We want to compute the trace $\operatorname{Tr}$ of $\gamma$ on $T_{g,n,d}$ in terms of the universal family $\widetilde{\pi}$. This is completely analogous with the case  $g=0$, which was dealt with in \cite{gito}. We write down the answer below.
Let $\mathbb{C}_{\zeta^k}$ be the line bundle on $X\times B\mathbb{Z}_m$ which is topologically trivial and on which $\gamma$ acts with eigenvalue $\zeta^k$. The first two summands in $T_{g,n,d}$ give

\begin{align}
\operatorname{Tr}(\pi_*(ev^* (T_X -1))= \sum_{k=0}^{m-1}\zeta^{-k}\widetilde{\pi}_*(ev^*(T_X-1)\otimes \mathbb{C}_{\zeta^k}),\label{indextwist}\\
 \operatorname{Tr}(\pi_*(L_{n+1}^{-1} -1)) =  \sum_{k=0}^{m-1}\zeta^{-k}\widetilde{\pi}_*(L_{n+1}^{-1} -1)\otimes \mathbb{C}_{\zeta^k}).\label{diltwist}
\end{align}
To compute the trace of $\gamma$ on the nodal class $(\pi_* i_* \mathcal{O}_\mathcal{Z})^\vee$  we need to look at the number of preimages of a node on the cover curve : denote by $\mathcal{Z}_{d}$ the nodal locus where the evaluation map $ev_+ \times ev_- $ at a node land in sectors of the inertia stack $IB\mathbb{Z}_m$ indexed by elements $h, h^{-1}$ of order $d:=ord(h)$. Then the node has $m/d$ preimages on the cover curve $C$. The smoothing bundle has dimension $m/d$: it contains a one dimensional subbundle which is tangent to the stratum and a subbundle of dimension $m/d -1$ normal to it.
We have
\begin{align}
\operatorname{Tr}(\pi_*i_*\mathcal{O}_{\mathcal{Z}_d})^\vee =\sum_{k=1}^{m/d-1}\zeta^{-k}(\widetilde{\pi}_*i_*(\mathcal{O}_{\mathcal{Z}_d} \otimes ev^*\mathbb{C}_{\zeta^k}))^\vee \label{nodaltwist}.
 \end{align}
 
 We now briefly recall the formalism of twisted GW theory, referring the reader to \cite{coatesgiv},\cite{tseng}, \cite{to1} for details.
 The correlators of a twisted theory are obtained by inserting in the integrals multiplicative characteristic classes of push-forwards along the universal family. For the target orbifold of our interest they take the form 
                         
                        $$ \int_{[(X/\mathbb{Z}_m)_{0,n,d}]}\left( \prod_{m=1}^n ev_m^*(\varphi_m)\psi^{k_m}_m\prod_{i} \mathcal{A}_i(\widetilde{\pi}_* (ev_{n+1}^* E)) \prod_j\mathcal{B}_j(\widetilde{\pi}_* [F(L^\vee_{n+1})-F(1)])\prod_k\mathcal{C}_k(\widetilde{\pi}_* i_*\mathcal{O}_\mathcal{Z}) \right),$$
                        where $\mathcal{A}_i, \mathcal{B}_j, \mathcal{C}_k$ are a finite number of multiplicative characteristic classes and $F$ is a polynomial with coefficients in $K^0(X)$. 
 
 One can pack the correlators in generators series to define the potential of the twisted theory. The formalism expresses the twisted potential in terms of the untwisted one. This was done in the most general setting in \cite{to1}. 
 
  Let us now describe the twisted theory we consider: for each root of unity $\zeta$, define the multiplicative characterstic class $\operatorname{Td}_\zeta$ whose value on line bundles is
   \begin{align*}
   \operatorname{Td}_\zeta (L)= \frac{1}{1-\zeta e^{-c_1(L)}}.
   \end{align*}
  We denote  by $\mathcal{D}_{tw}^{X/\mathbb{Z}_m}$ the potential of the cohGW theory of $X\times B\mathbb{Z}_m$ twisted by the following classes:

  \begin{align}
  \operatorname{Td}(\widetilde{\pi}_*(ev^* (T_X -1))\prod_{k=1}^{m-1}Td_{\zeta^k}\widetilde{\pi}_*(ev^*(T_X-1)\otimes \mathbb{C}_{\zeta^k}),\label{typea}\\
  \operatorname{Td}\widetilde{\pi}_*(1- L^{-1}_{n+1} )\prod_{k=1}^{m-1}\operatorname{Td}_{\zeta^k}\widetilde{\pi}_*((1- L_{n+1}^{-1} )\otimes ev^*\mathbb{C}_{\zeta^k})), \label{typeb}\\
  \prod_d\operatorname{Td}^\vee(-\widetilde{\pi}_*i_*\mathcal{O}_{\mathcal{Z}_d})\prod_{k=1}^{m/d-1}\operatorname{Td}^\vee_{\zeta^k}(-\widetilde{\pi}_*i_*(\mathcal{O}_{\mathcal{Z}_d} \otimes ev^*\mathbb{C}_{\zeta^k}) ).\label{typec}
   \end{align}
  This choice of classes is essentially imposed on us by formulae (\ref{indextwist}),(\ref{diltwist}),(\ref{nodaltwist}).  The classes $\operatorname{Td}$ (we use the convention $\operatorname{Td}^\vee (L) = \operatorname{Td}(L^\vee)$) are contributions of the Todd class of the tangent bundle in KRR, while the classes $\operatorname{Td}_\zeta$ occur in the denominator of KRR involving the normal bundle to Kawasaki strata.

   We now proceed to express $\mathcal{D}_{tw}^{X/\mathbb{Z}_m}$ in terms of the untwisted potential $\mathcal{D}_{X/\mathbb{Z}_m}$: let us denote by $\mathcal{D}_\mathcal{A}^{X/\mathbb{Z}_m}$ the potential of $X\times B\mathbb{Z}_m$ twisted by the classes (\ref{typea}).  Tseng's theorem in \cite{tseng} expresses  $\mathcal{D}_\mathcal{A}^{X/\mathbb{Z}_m}$  in terms of $\mathcal{D}_{X/\mathbb{Z}_m}$. The graph of the differential of the genus zero twisted potential is related to the untwisted one by a symplectic linear trasformation of $T^*\mathcal{H}_+$. To this symplectic operator one can associate a quantized operator via the quantization formalism introduced in \cite{gi}. Then  $\mathcal{D}_\mathcal{A}^{X/\mathbb{Z}_m}$ is obtained from $\mathcal{D}_{X/\mathbb{Z}_m}$ by applying this quantized operator. 
 
It turns out that in our case the symplectic operator of the theory twisted by the datum  $(\operatorname{Td}_{\zeta^k},(T_X -1)\otimes \mathbb{C}_{\zeta^k} )$ ($1\leq k\leq m-1$) takes the form of a product of symplectic operators $\prod_{h\in \mathbb{Z}_m} \triangle_{k,h}$ . 
In the following we will write these symplectic operators as Euler MacLaurin asymptotics of infinite products.
 The Euler MacLaurin asymptotics of a product $\prod_{l=1}^\infty e^{s(x-lz+\frac{k}{m} z)}$ is a formal power series defined by
  \begin{align*}
  \sum_{l>0}s(x+ \frac{k}{m}  z-lz)=\sum_{l\geq 0}B_l(\frac{k}{m})(z\partial_x)^{l-1}\frac{s(x)}{l!},
  \end{align*}
 where the Bernoulli polynomials are defined by 
 \begin{align*}
                    \sum_{l\geq 0}B_l(x)\frac{t^l}{l!}= \frac{te^{tx}}{e^t-1}.
                    \end{align*} 
 Let 
 $$ \triangle := \prod_i\prod_{l=1}^\infty \frac{x_i}{1-e^{-x_i+lz}}, $$
  where $x_i$ are the Chern roots of $T_X$ and they act by multiplication on $\mathcal{H}_+$. For $h=1\in \mathbb{Z}_m$,  
$\triangle_{k,1}$ is the Euler-Maclaurin expansion  of
 \begin{align*}
 \prod_i\prod_{l=1}^\infty \frac{1}{1-\zeta^ke^{-x_i+lz}}.
 \end{align*}

 For $h=\gamma^r \neq 1$ the operator $\triangle_{k,\gamma^r}$ is the Euler-Maclaurin asymptotics of 
  \begin{align*}
  \prod_i \prod_{l=1}^\infty \frac{1}{1-\zeta^k e^{-x_i +lz- \langle kr/m\rangle z }},
  \end{align*}
  where $\langle t \rangle$ is the fractionary part of $t$ .

The quantization of a linear symplectomorphism $A$ of $T^*\mathcal{H}_+$ is obtained as follows. Let $H$ be the infinitesimal symplectic transformation
$\operatorname{ln}(A)$. To $H$ one can naturally associate a quadratic hamiltonian 
 $$h:=\frac{1}{2}\Omega(\mathbf{t}(z), H\mathbf{t}(z)),$$
 where $\Omega$ is the symplectic form. The quantization of quadratic hamiltonians is defined in a Darboux basis $\{p_i,q_i\}$ by
 $$\widehat{p_ip_j} = \hbar \partial_i \partial_j,\quad \widehat{p_iq_j} = \partial_i q_j,\quad \widehat{q_iq_j}=\frac{q_iq_j}{\hbar}.  $$
 Then one defines
 $$\hat{A}:=\operatorname{exp}(\hat{h}).$$
The potential $\mathcal{D}_{\mathcal{A}}^{X/\mathbb{Z}_m}$ is given by (see \cite{to1}, Corollary 6.1)
\begin{align}
\mathcal{D}_{\mathcal{A}}^{X/\mathbb{Z}_m} =\prod_{h\in \mathbb{Z}_m} \left(\hat{\triangle}\prod_{k=1}^{m-1}\hat{\triangle}_{k,h}\right)\mathcal{D}_{X/\mathbb{Z}_m}.\label{atw}
\end{align} 

  Let us denote by $\mathcal{D}_{\mathcal{A,B}}^{X/\mathbb{Z}_m}$ the potential twisted by the class (\ref{typeb}). Then according to \cite{to1}, Corollary 6.2 the potential 
 $\mathcal{D}_{\mathcal{A,B}}^{X/\mathbb{Z}_m}$ differs from $\mathcal{D}_{\mathcal{A}}^{X/\mathbb{Z}_m}$ by the translation
 \begin{align*} 
  \mathbf{t}(z) \mapsto \mathbf{t}(z) +z - z \operatorname{Td}(-\mathbf{L}_z^{-1})\prod_{k=1}^{m-1} \operatorname{Td}_{\zeta^k} (-\mathbb{C}_{\zeta^k} \mathbf{L}_z^{-1}) =\\
                    =  \mathbf{t}(z) +z + z \frac{1-e^{z}}{z }\prod_{k=1}^{m-1}(1-\zeta^k e^z) = \mathbf{t}(z) +z + (1-e^{mz}).
      \end{align*} 
  In other words
  \begin{align}
  \mathcal{D}_{\mathcal{A,B}}^{X/\mathbb{Z}_m}\left(\sum_h \mathbf{t}_h h - z\right) = \mathcal{D}_\mathcal{A}^{X/\mathbb{Z}_m}\left(\sum_h \mathbf{t}_h h +1 -e^{mz}\right).\label{btw}
  \end{align}

  We use again the results of \cite{to1} to express the potential twisted by the nodal classes (\ref{typec}) in terms of $\mathcal{D}_{\mathcal{A,B}}^{X/\mathbb{Z}_m}$ . The smoothing conormal direction to a nodal stratum $\mathcal{Z}_d$ is given by $L_+ \otimes L_-$ where $L_+, L_-$ are cotangent line bundles to the branches with first Chern classes $\psi_+, \psi_-$. Define the coefficients $A_{a,b}^d$  as  
  \begin{align}
   \sum_{a,b \geq 0} A^d_{a,b}\overline{\psi}^a \overline{\psi}^b & = \frac{1-\operatorname{Td}^\vee (L_+L_- -1)\prod_{k=1}^{m/d-1}\operatorname{Td}^\vee_{\zeta^k}(L_+L_- -1)}{\psi_+ +\psi_-}  \nonumber\\
  & = \frac{d}{(\overline{\psi}_+ + \overline{\psi}_-)}-\frac{m/d}{e^{m/d^2(\overline{\psi}_+ + \overline{\psi}_-)}-1}.
  \end{align} 
 Here we denoted by $\overline{\psi}_+ = d\psi_+ , \overline{\psi}_- =d\psi_-$   the first Chern classes of cotangent line bundles on the cover curve.
 Let us define
   \begin{align*}
    \nabla_h := exp\left(\sum_{a,b,\alpha,\beta} \frac{\hbar}{2} A^d_{a,b}\partial_{a,\alpha}g^{\alpha\beta}\partial_{b,\beta}\right),
    \end{align*}

  where $d=ord(h)$ and $\alpha,\beta$ index elements of $H^*(X)$, the summation is after a basis $\{\varphi_\alpha\}$ of $H^*(X)$ and $(g^{\alpha\beta})$ is the matrix inverse to 
 \begin{align*}
 g_{\alpha\beta} =\int_X \varphi_\alpha\smile \varphi_\beta .
 \end{align*}
 Then Theorem 1.3 of \cite{to1} states that
   \begin{align}
   \mathcal{D}_{tw}^{X/\mathbb{Z}_m}     = \prod_{h\in \mathbb{Z}_m} \nabla_h\mathcal{D}_{\mathcal{A,B}}^{X/\mathbb{Z}_m}.\label{ctw}
        \end{align}
 
 The sequence of formulae (\ref{xxzm}),(\ref{atw}),(\ref{btw}),(\ref{ctw}) express the  potential of $\mathcal{D}_{tw}^{X/\mathbb{Z}_m}$ in terms of Coh GW potential of $X$.  
 
\section{ The K-theoretic GW potential of $X$}

 We first compute  the trace of the action of $\gamma$ on the partial ramification points of a quotient curve $C/\mathbb{Z}_m$.  
 
 \begin{acg}\label{011}
{\em  Assume that a special point on the quotient curve $C/\mathbb{Z}_m$ is a branch point of type $r$. Let $d=g.c.d.(r,m)$ and let $a,b$ be such $d=ar+mb$. Assume each of the $d$ preimages of the point on $C$ has input $\mathbf{t}(L)$. Then the input of that point in the correlators in KRR (viewed as integrals on the moduli spaces of maps to $X/\mathbb{Z}_m$) is $ \operatorname{ch}\Psi ^d \mathbf{t}(e^{2\pi i ad/m}L^{d/m})$.} 
 \end{acg}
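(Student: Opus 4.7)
The plan is to compute the $\gamma$-trace of the combined K-theoretic insertion $\bigotimes_{i=1}^{d}\mathbf{t}(L_i)$ at the $d$ preimages on the cover curve $C$, and then repackage the result as a single insertion at the corresponding point of the quotient $C/\mathbb{Z}_m$ viewed as a stable map to $X\times B\mathbb{Z}_m$. This is a purely local computation at the branch point, parallel to the genus-zero non-permutation-equivariant case handled in \cite{gito}; the task here is to verify that nothing changes in the present setting.

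First I would pin down the local $\gamma$-action. By the character convention introduced above for branch points of type $r$, the element $\gamma^{r}$ acts on each cotangent line $L_i$ at a preimage by the scalar $e^{2\pi i d/m}$. Combined with $\gamma^{m}=1$ and the Bezout relation $d=ar+mb$, this gives $\gamma^{d}=(\gamma^{r})^{a}$, so $\gamma^{d}$ acts on $L_i$ by $e^{2\pi i ad/m}$. Second, I would relate the cover cotangent $L_i$ to the orbifold cotangent $L$ at the quotient point: since the quotient map has ramification index $m/d$ at each preimage, one has $\pi^{*}L\cong L_{i}^{m/d}$, which in the K-theory of $X\times B\mathbb{Z}_m$ allows us to represent $L_i$ as the orbifold fractional power $L^{d/m}$ twisted by a character of $\mathbb{Z}_m$. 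Third, I would compute the $\gamma$-trace of $\bigotimes_{i=1}^{d}\mathbf{t}(L_i)$, on which $\gamma$ acts by cyclic permutation of the $d$ factors together with the additional scalar $e^{2\pi i ad/m}$ coming from $\gamma^{d}$ acting on $L_1$. The standard identification between the trace of a $d$-cycle on a tensor power $V^{\otimes d}$ and the Adams operation $\Psi^{d}V$ converts this cyclic trace into $\Psi^{d}$ applied to the single-factor input. Packaging the character twist from Step 1 and the fractional power from Step 2 into the argument of $\Psi^{d}$, and then applying the Chern character, produces exactly $\operatorname{ch}\Psi^{d}\mathbf{t}(e^{2\pi i ad/m}L^{d/m})$.

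The main technical hurdle is the third step: one must verify that the cyclic-trace formula for tensor powers, combined with the $\gamma^{d}$-character $e^{2\pi i ad/m}$ and the fractional-power line bundle $L^{d/m}$, assembles into $\Psi^{d}$ applied to the Laurent polynomial $\mathbf{t}$ in the sense used throughout the paper. Concretely, one should check that the Adams operation interacts correctly with both the root-of-unity coefficient and the orbifold line bundle on $X\times B\mathbb{Z}_m$, so that the final expression is literally $\operatorname{ch}\Psi^{d}\mathbf{t}(e^{2\pi i ad/m}L^{d/m})$ and not a twisted variant. Once these conventions are matched, the remaining steps are routine local computations identical to their genus-zero analogues in \cite{gito}.
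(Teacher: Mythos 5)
Your proposal reconstructs the same argument the paper uses, with more detail on the two points the author leaves implicit: the cyclic-trace/Adams-operation identity that produces $\Psi^d$ from the $d$ preimages permuted by $\gamma$, and the identification of the cover cotangent line with $L^{d/m}$ via the ramification index $m/d$. The key character computation, that $\gamma^d=(\gamma^r)^a$ acts on each cotangent line by $e^{2\pi i ad/m}$, is exactly the paper's central step, so the two proofs agree.
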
 
 
 Proof: The $\Psi^d$ occurs because there are $d$ preimages of the point on the cover curve $C$ which are identified on the quotient. The generator of the group $\mathbb{Z}_{m/d} $ acts on the cotangent line at each preimage by $e^{2\pi i ad/m}$. The extreme cases $r=1, r=m$ have been covered in \cite{gito}). 
 
 \begin{acg1}
 {\em  If the branch point is not a marked point is tantamount to considering it as a marked point with the input $\mathbf{t}(L) =1 - L$ (see \cite{gito}). Hence the contributions in the correlators in KRR is $ \operatorname{ch}\Psi ^d (1- e^{2\pi i ad/m}L^{d/m})$.}
 \end{acg1}
 
  According to KRR theorem the K-theoretic GW potential $\mathcal{D}^K_X (\mathbf{t}(L))$ of $X$  is (the exponential of) a sum over graphs $\Gamma$ of cohomological integrals. We denote the contributions of vertices $v$ of $\Gamma$ by $(\mathcal{D}_{tw}^{X/\mathbb{Z}_{m_v}})_v$: they are certain correlators from the theory $\mathcal{D}^{X/\mathbb{Z}_{m_v}}_{tw}$ where:

\begin{itemize}
\item the insertion in the correlators at each seat corresponding to a branch point of type $r$ which is a special (i.e. marked or node) point on the cover curve is $\hbar^{(m_v - d)} \operatorname{ch}\Psi ^d \mathbf{t}( e^{2\pi i ad/m}L^{d/m})$. Here $a,d$ are determined by the ramification datum as in Remark \ref{011}.
\item  the insertion at each branch point of type $r$ which is not a special point on the marked curve is $\hbar^{(m_v-d)} \operatorname{ch}\Psi ^d (1-e^{2\pi i ad/m} L^{d/m})$. 
\item we make the change of variables $Q^{d_v}\mapsto Q^{d_v m_v}$, $\hbar\mapsto \hbar^{m_v} $.
\end{itemize}
\begin{rem}
{\em The presence of $\hbar$ in the correlator insertion at marked points is to account for the difference in genus between $C_v$ and $C_v/\mathbb{Z}_{m_v}$.}
\end{rem}

Let now $e$ be an edge of $\Gamma$ adjacent to vertices $v$ and $w$, $F_v=(v,e), F_w=(w,e)$. At most one of the two quotient curves meeting at the node can has a disconnected cover. Assume $C_w$ consists of a disjoint union of $d$ curves. Then the contribution in KRR given by the normal directions which smoothen the nodes equal
\begin{align*}
\frac{1}{1-\zeta_{F_v} L_v^{1/m_v}\zeta_{F_w} \Psi^d L_w^{d/m_w}},
\end{align*}
where $\zeta_{F_v}, \zeta_{F_w}$ are primitive roots of orders $m_v$ and $m_w/d$ respectively. This class gets distributed in the correlator seats corresponding to the node on both branches.   

Let $\{\Phi_a\}, \{\Phi^a\}$ be dual bases in $K^0(X)$ 
and define the differential operator $A_e$ in the following way: expand the expression
\begin{align}
\frac{\hbar}{2}\left(\sum_a \operatorname{ch} \frac{\Phi_a\otimes \Phi^a}{1-\zeta_{F_v}L_v^{1/m_v} \zeta_{F_w}L_w^{d/m_w}} \right),\label{edgeop}
\end{align}
 as
 \begin{align*}
\frac{\hbar}{2} \left(\sum_{a,b, m,n} A_{a,m;b,n}\varphi_a\overline{psi}_v^m \varphi_b \overline{\psi}_w^n \right) \in H^*(X)[[\overline{\psi}_w]]\otimes H^*(X)[[\overline{\psi}_v]].
 \end{align*}
 Then
 \begin{align*}
 A_e:= \Psi^d \left[\frac{\hbar}{2} \left(\sum_{a,b, m,n} A_{a,m;b,n} \partial_{a,m}\partial_{b,n} \right)\right].
 \end{align*}
where $\Psi^d$ acts on $\varphi_b, \overline{\psi}_w$ ,$\hbar$ (both in the expression for $A_e$ and in $(\mathcal{D}_{tw}^{X/\mathbb{Z}_{m_w}})_w$)  and  $Q$ in $(\mathcal{D}_{tw}^{X/\mathbb{Z}_{m_w}})_w$.

We can now state
\begin{mainst}\label{mainthm}

The permutation-equivariant K-theoretic GW potential of $X$ equals:
\begin{align*}
log(\mathcal{D}_X^K ) = \sum_\Gamma  \prod_{e}  A_e \left( \prod_{v\in
  V(\Gamma)} \frac{(\mathcal{D}_{tw}^{X/\mathbb{Z}_{m_v}})_v}{\vert S_v\vert}\right).
\end{align*}

\end{mainst}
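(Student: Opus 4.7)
The plan is to apply the virtual KRR formula (via Proposition 3.2) to $X_{g,n,d}/S_n$, then use Corollary 3.8 to decompose the resulting cohomological integral over the inertia stack as a sum over graphs with ramification data, and finally identify the contribution of each graph with the vertex and edge data announced in the statement. Concretely, I would write
\begin{align*}
\la \mathbf{t}(L),\ldots,\mathbf{t}(L) \ra^{S_n}_{g,n,d} = \sum_\Gamma \int_{Z_\Gamma} \operatorname{Td}(T_{Z_\Gamma})\,\operatorname{ch}\!\left(\frac{\operatorname{Tr}(\mathcal{O}^{vir}\otimes \prod_i \mathbf{t}(L_i))}{\operatorname{Tr}(\Lambda^\bullet N^\vee_\Gamma)}\right),
\end{align*}
and note that by Proposition 3.7 the stratum $Z_\Gamma$ factors, up to automorphisms of $\Gamma$, as a product over vertices of open-and-closed substacks of $(X\times B\mathbb{Z}_{m_v})_{g_v,|b|+a_0,d_v}/S_v$, with the factors glued at nodes along evaluation maps.

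Next I would verify that the KRR integrand factorizes compatibly with this product decomposition. The trace classes computed in (\ref{indextwist}), (\ref{diltwist}), (\ref{nodaltwist}) split into a vertex piece, corresponding exactly to the twisting classes (\ref{typea})--(\ref{typec}) of the theory $\mathcal{D}_{tw}^{X/\mathbb{Z}_{m_v}}$, plus an edge piece from the nodes whose cotangent pair has nontrivial total character. For the marked-point insertions I would invoke Proposition \ref{011}: at a branch point of type $r$ on the quotient curve, $\operatorname{Tr}$ of $\mathbf{t}(L)^{\otimes d}$ acts on the one identified image point as $\operatorname{ch}\Psi^d \mathbf{t}(e^{2\pi i a d/m} L^{d/m})$, and the substitution $\mathbf{t}\mapsto 1-L$ handles unmarked ramification seats. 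The power $\hbar^{m_v - d}$ records the Riemann--Hurwitz genus difference at each seat (Remark 4.1), while the rescalings $Q^{d_v}\mapsto Q^{d_v m_v}$ and $\hbar\mapsto\hbar^{m_v}$ convert the degree and genus of the quotient to those of the cover.

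For the edge contribution I would single out the node-smoothing class. At a flag pair $(F_v,F_w)$ of type $d$ the normal line is $L_v\otimes L_w$ with the character $\zeta_{F_v}\zeta_{F_w}$ acting, so $1/(1-\zeta_{F_v}L_v^{1/m_v}\zeta_{F_w}\Psi^d L_w^{d/m_w})$ couples the $\psi_v$- and $\psi_w$-insertions at the two branches. Summing over the dual basis $\{\Phi_a\},\{\Phi^a\}$ in $K^0(X)$ produced by the diagonal class of $X$ under Poincar\'e duality, and using the standard dictionary between $\overline{\psi}$-insertions and derivatives $\partial_{a,m}$ in the descendant potential, converts this smoothing factor into exactly the operator $A_e$ of (\ref{edgeop}) acting on the product of vertex potentials. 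Summing over degrees $d_v$ and insertion data packages the vertex integrals into the generating series $(\mathcal{D}_{tw}^{X/\mathbb{Z}_{m_v}})_v$, which is expressible via (\ref{xxzm}), (\ref{atw}), (\ref{btw}), (\ref{ctw}) in terms of $\mathcal{D}_X$.

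The main obstacle will be the bookkeeping of automorphism factors. The orbifold stratum $Z_\Gamma$ carries automorphisms coming from (i) the groups $S_v$ permuting marked and unmarked ramification points of each type within a vertex, (ii) cyclic automorphisms of the covers $C_v\to C_v/\mathbb{Z}_{m_v}$ already built into the twisted theory, and (iii) automorphisms of the graph $\Gamma$ together with the residual $S_n$-action permuting marked points; these must be reconciled with the $|S_v|^{-1}$ prefactor and with the fact that edges link vertices symmetrically so as not to double-count. The standard combinatorial identity expressing connected generating functions as a sum over connected graphs and disconnected ones as products is what produces the $\log$ on the left-hand side and promotes the formula to one for $\mathcal{D}_X^K$ itself after exponentiation. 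Once this combinatorial reconciliation is in place, assembling the factorized KRR integrand gives the formula of Theorem \ref{mainthm}.
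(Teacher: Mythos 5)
Your proposal is correct and follows essentially the same route the paper takes: the paper does not write out a separate proof of Theorem~\ref{mainthm} but instead assembles it from Proposition~3.2 (virtual KRR on $X_{g,n,d}/S_n$), Corollary~3.8 (graph decomposition of the inertia stack), Section~4 (twisted potentials of $X\times B\mathbb{Z}_{m_v}$ as vertex contributions), and the edge-operator construction preceding the theorem, exactly as you outline. Your closing paragraph on reconciling automorphism factors, the $S_v$ prefactors, and the $\log$/exponential structure is in fact more explicit than anything the paper records, and correctly flags the main bookkeeping that would require care in a fully detailed writeup.
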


\begin{kcoh}
{\em All permutation-equivariant K-theoretic GW invariants of $X$ are determined by the cohomological GW invariants.}
\end{kcoh}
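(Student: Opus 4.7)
The plan is to assemble the theorem from the ingredients developed in the previous sections, with KRR serving as the master identity from which everything is read off.

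First, I would apply the virtual KRR formula (valid by Conjecture \ref{conj1}, hence for $X=[pt.]$ unconditionally) to $\chi(X_{g,n,d}/S_n, \mathcal{O}^{vir}_{g,n,d}\otimes_i\mathbf{t}(L_i))$ for each $(g,n,d)$. By Corollary \ref{in5}, the resulting sum over Kawasaki strata reorganizes as a sum over decorated graphs $\Gamma$ with vertex ramification data $\vec{R}_v$ and flag data $r_F$, weighted by $1/|\operatorname{Aut}(\Gamma)|$. Because $\mathcal{D}_X^K$ is defined as $\exp\sum_g\hbar^{g-1}\mathcal{F}_g^K$, taking the logarithm restricts the sum to \emph{connected} graphs, which is exactly the form of the claimed formula; I would verify this standard exponential/connected-component dichotomy first, so that afterwards every Kawasaki stratum can be treated with a connected combinatorial type.

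Next, on a stratum indexed by $\Gamma$, I would decompose the KRR integrand into vertex and edge pieces. Using the decomposition of the virtual tangent bundle into $\pi_*ev^*(T_X-1)$, $\pi_*(L_{n+1}^{-1}-1)$, and the nodal term $(\pi_*i_*\mathcal{O}_\mathcal{Z})^\vee$ together with the trace identities (\ref{indextwist})--(\ref{nodaltwist}), the Todd and $\operatorname{Td}_\zeta$ factors group precisely into the twisted classes (\ref{typea})--(\ref{typec}) at each vertex. Hence by Proposition \ref{in4} the vertex piece is an integral over $(X\times B\mathbb{Z}_{m_v})_{g_v,|b|+a_0,d_v}/S_v$ in the twisted theory $\mathcal{D}_{tw}^{X/\mathbb{Z}_{m_v}}$. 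The automorphism factor $1/|S_v|$ appears because the labeling of ramified and unramified marked preimages is only defined up to the group $S_v=\prod_r S_{A_r}\prod_r S_{B_r}$. At each seat the insertion is determined by Proposition \ref{011} and its accompanying remark: marked branch points of type $r$ contribute $\operatorname{ch}\Psi^d\mathbf{t}(e^{2\pi i ad/m}L^{d/m})$ and unmarked ones contribute $\operatorname{ch}\Psi^d(1-e^{2\pi i ad/m}L^{d/m})$. The rescalings $Q^{d_v}\mapsto Q^{d_v m_v}$ and $\hbar\mapsto \hbar^{m_v}$ come respectively from $d=m_v d_v$ on the cover and from Remark preceding \ref{011} on the genus discrepancy via Riemann--Hurwitz, accounted for by the power $\hbar^{m_v-d}$ at each seat.

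For the edge contributions, I would treat the normal directions smoothing the nodes. At an edge $e=(v,w)$ with $C_w$ breaking into $d$ cyclically permuted components, the trace of $\gamma$ on the smoothing bundle gives the factor $1/(1-\zeta_{F_v}L_v^{1/m_v}\zeta_{F_w}\Psi^d L_w^{d/m_w})$. Expanding in $\overline{\psi}_v,\overline{\psi}_w$ and applying the standard string/splitting identity $\sum_a \Phi_a\otimes\Phi^a = \Delta_*[X]$ in K-theory transfers these mixed insertions into a bidifferential operator acting on the product of vertex potentials; this is precisely the definition of $A_e$ in (\ref{edgeop}), with the $\hbar/2$ accounting for the genus contributed by smoothing the node and the prefactor $\Psi^d$ implementing the identification of the $d$ permuted branches on the $w$ side. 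Multiplicativity of KRR over the decomposition of $T_{g,n,d}$ then gives the product $\prod_e A_e\prod_v(\mathcal{D}_{tw}^{X/\mathbb{Z}_{m_v}})_v/|S_v|$.

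The main obstacle I expect is the bookkeeping at the edges: checking that (i) the Adams operation $\Psi^d$ is applied to \emph{exactly} the right arguments ($\varphi_b$, $\overline{\psi}_w$, $\hbar$, and $Q$ on the $w$-side, as in the definition of $A_e$), and (ii) the symmetry factor $|S_v|$ together with $|\operatorname{Aut}(\Gamma)|$ correctly reproduces the orbifold counting on $I(X_{g,n,d}/S_n)$. In particular one must verify that automorphisms of $\Gamma$ permuting the $d$ identified components of $C_w$ are not double-counted between $\operatorname{Aut}(\Gamma)$ and the $\Psi^d$ on the edge. Once these normalizations are matched, substituting the expression for $\mathcal{D}_{tw}^{X/\mathbb{Z}_{m_v}}$ from the chain (\ref{xxzm}), (\ref{atw}), (\ref{btw}), (\ref{ctw}) into the above yields Theorem \ref{mainthm} and, together with the computability of each factor from cohGW invariants of $X$, the corollary.
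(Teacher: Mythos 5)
The proposal is correct and takes essentially the same approach as the paper: the paper develops the argument informally across Sections~3--5 (virtual KRR, the graph/ramification-data encoding of Kawasaki strata, the twisted-potential reductions, and the edge operators $A_e$) and then states Theorem~\ref{mainthm} and this corollary without a separate proof environment. Your reconstruction --- KRR applied per~Conjecture~\ref{conj1}, log $\Rightarrow$ connected graphs, vertex pieces identified via Proposition~\ref{in4} and Proposition~\ref{011} as correlators of $\mathcal{D}_{tw}^{X/\mathbb{Z}_{m_v}}$, edges packaged into $A_e$, and then the chain (\ref{xxzm}), (\ref{atw}), (\ref{btw}), (\ref{ctw}) reducing everything to the cohGW potential of $X$ --- is precisely the derivation the paper intends, and your flagging of the $|\operatorname{Aut}(\Gamma)|$ versus $|S_v|$ bookkeeping is a fair caution on a point the paper leaves implicit.
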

 
 
 
 
 
 

\begin{thebibliography}{ABCDE}
 \bibitem{behfan}
 K. Behrend, B. Fantechi, {\em The intrinsic normal cone}, Invent. Math. \textbf{128}, no. 1 (1997), 45--88.
 \bibitem{tom}
  T. Coates, {\em Riemann-Roch theorems in Gromov-Witten theory}, Ph.D. Thesis, 2003.
 \bibitem{coatesgiv}
  T. Coates, A. Givental, {\em Quantum Riemann-Roch, Lefschetz and Serre}, Ann. Math \textbf{165}, no.1 (2007), 15--53.
\bibitem{gi}
A.Givental, {\em Gromov-Witten invariants and quantization of quadratic Hamiltonians}, Mosc. Math. J. \textbf{1} (2001), no. 4, 551--568, 645.
 \bibitem{giv}
 A. Givental, {\em Permutation-equivariant quantum K-theory} I-VIII, https://math.berkeley.edu/~giventh/perm/perm.html.
  \bibitem{giv_lee}
   A. Givental, Y.-P. Lee, {\em Quantum K-theory on flag manifolds, finite difference Toda lattices and quantum groups}, Invent. Math. \textbf{151} (2003), 193--219. 
  \bibitem{gito}
   A. Givental, V. Tonita, {\em The Hirzebruch-Riemann-Roch Theorem in true genus-$0$ quantum K-theory}, in ``Symplectic, Poisson, and Noncommutative Geometry'', 43--92, Math. Sci. Res. Inst. Publications, vol. 62, Cambridge Univ. Press, 2014, arXiv:1106.3136.
 
   \bibitem{kawasaki}  
   T. Kawasaki, {\em The Riemann-Roch theorem for complex V-manifolds}, Osaka J. Math. \textbf{16}, no. 1 (1979), 151--159.
 \bibitem{jaki}
 T. Jarvis, T. Kimura,
 {\em Orbifold quantum cohomology of the classifying space of a finite group}, Orbifolds in mathematics and physics, (Madison WI, 2001) 123--134, Contemp. Math., \textbf{310}, Amer. Math. Soc., Providence, RI, 2002.
 \bibitem{ypl}
  Y.-P. Lee, {\em Quantum K-theory. I. Foundations}, Duke Math. J. \textbf{121} (2004), no. 3, 389--424.
  
   \bibitem{leequ}
   Y.P. Lee, F. Qu, {\em Euler characteristics of universal cotangent line bundles on $\overline{M}_{1,n}$}, Proc. Amer. Math. Soc. \textbf{142} (2014), no.2, 429--440.
 \bibitem{np}
 N. Pagani, {\em The Chen-Ruan cohomology of moduli of curves of genus 2 with marked points}, Adv. Math. \textbf{229} (2012), no. 3, 1643--1687.
  
   \bibitem{npot}
  N. Pagani, O. Tommasi,
  {\em The orbifold cohomology of moduli of genus 3 curves}, Manuscripta Math. \textbf{142} (2013), no.3--4, 409--437.
  
  \bibitem{toen}
  B. To\"en, {\em Th\'eor\`emes de Riemann-Roch pour les champs de Deligne-Mumford}, K-Theory \textbf{18}, no.1 (1999), 33--76.
  \bibitem{to_k}
    V. Tonita, {\em A virtual Kawasaki formula}, Pacific J. Math. \textbf{268} (2014), no. 1, 249--255.
    \bibitem{to1}
      V. Tonita, {\em Twisted orbifold Gromov-Witten invariants}, Nagoya Math. J. \textbf{213} (2014), 141--187.
      
     \bibitem{tseng}
     H.-H. Tseng, {\em Orbifold quantum Riemann-Roch, Lefschetz and Serre}, Geom. Top. \textbf{14} (2010), no.1, 1--81.


\end{thebibliography}
\end{document}